\newtheorem{thm}{Theorem}[section]
\newtheorem{lem}[thm]{Lemma}
\newtheorem{state}[thm]{Statement}
\newenvironment{proofb}{\begin{proof}[Proof of Theorem 1.2]}{\end{proof}}
\numberwithin{equation}{section}
\journal{}
\begin{document}
\begin{spacing}{1.15}
\begin{frontmatter}
\title{\textbf{\\All eigenvalues of the power hypergraph and signed subgraphs of a graph }}
\author[label1,label2]{Lixiang Chen}\ead{chenlixiang@hrbeu.edu.cn}
\author[label2]{Edwin R. van Dam}\ead{Edwin.vanDam@tilburguniversity.edu}
\author[label1]{Changjiang Bu}\ead{buchangjiang@hrbeu.edu.cn}
\address[label1]{College of Mathematical Sciences, Harbin Engineering University, Harbin, PR China}
\address[label2]{Department of Econometrics and O.R., Tilburg University, Tilburg, The Netherlands}

\begin{abstract}
We show how all eigenvalues of a power hypergraph $G^{(k)}$ can be generated from the eigenvalues of signed subgraphs of the underlying graph $G$.
This fixes an incorrect statement in the case of power hypergraphs from [Linear Algebra and its Applications, 590:243-257, 2020].
\end{abstract}

\begin{keyword} eigenvalues, power hypergraphs, signed graphs\\
\emph{AMS classification(2020):}05C65, 05C50.
\end{keyword}
\end{frontmatter}

\section{Introduction}
A hypergraph $H$ is called \emph{$k$-uniform} if each edge of $H$ contains exactly $k$ vertices.
The eigenvalues of the adjacency tensor of $H$ are called the eigenvalues of $H$ \cite{cooper2012spectra}.
The $k$-power hypergraph $G^{(k)}$ is the $k$-uniform hypergraph that is obtained by adding $k-2$ new vertices to each edge of a graph $G$, for $k \geq 3$ (where each edge of $G$ gets different new vertices).

Zhou, Sun, Wang, and Bu \cite[Thm.~16]{Zhou2014Some} showed that the complex solutions $\lambda$ of $\lambda^k=\beta^{2}$ are eigenvalues of $G^{(k)}$ if $\beta$ is an eigenvalue of $G$, and that also the spectral radius of $G^{(k)}$ can be obtained this way. Moreover, they showed that also the eigenvalues of subgraphs of $G$ give rise to eigenvalues of $G^{(k)}$, for $k \geq 4$.

In \cite[Thm.~3]{cardoso2020spectrum}, it was stated that all distinct eigenvalues of a (so-called) generalized power hypergraph $H_s^{(k)}$ can be generated from eigenvalues of subgraphs of the $r$-uniform hypergraph $H$.
When we restrict that result to the case that $H$ is a graph $G$ (i.e., $r=2$), then we obtain the following statement.

\begin{state}\label{chenshu1}
The complex number $\lambda$ is an eigenvalue of $G^{(k)}$ if and only if \\
\noindent(a) some induced subgraph of $G$ has an eigenvalue $\beta$ such that $\beta^2=\lambda^k$, when $k=3$;\\
\noindent(b) some subgraph of $G$ has an eigenvalue $\beta$ such that $\beta^2=\lambda^k$, when $k\geq4$.
\end{state}

However, this result is incorrect, as we shall see in Section \ref{sec:3.1}.
In this paper, we shall extend the result of Zhou et al.~\cite{Zhou2014Some}, and fix the above incorrect statement by using the spectra of signed subgraphs of $G$.

A signed graph $G_\pi$ is a pair $(G, \pi)$, where $G=(V,E)$ is a graph and $\pi:E \rightarrow \{+1,-1\}$ is the edge sign function.
We use $i \sim j$ to denote that the vertices $i$ and $j$ are adjacent in the graph $G$.
The adjacency matrix $A(G_\pi)=(A_{ij})$ of the signed graph $G_\pi$ is the symmetric $\{0,+1,-1\}$-matrix, where
\begin{align*}
A_{ij}=\left\{ \begin{array}{l}
 \pi(i,j),{\kern 35pt}  i \sim j, \\
 0, {\kern 57pt}\mathrm{ otherwise}. \\
\end{array} \right.
\end{align*}
The eigenvalues of $A(G_{\pi})$ are called the eigenvalues of $G_{\pi}$.
An (induced) subgraph of the signed graph $G_{\pi}$ is called a signed (induced) subgraph of $G$.
Using the eigenvalues of signed (induced) subgraphs of $G$,  we can obtain all distinct eigenvalues of $G^{(k)}$ as follows.

\begin{thm}\label{dingli2}
The complex number $\lambda$ is an eigenvalue of $G^{(k)}$ if and only if \\
\noindent(a) some\textbf{ signed induced subgraph} of $G$ has an eigenvalue $\beta$ such that $\beta^2=\lambda^k$, when $k=3$;\\
\noindent(b) some \textbf{signed subgraph} of $G$ has an eigenvalue $\beta$ such that $\beta^2=\lambda^k$, when $k\geq4$.
\end{thm}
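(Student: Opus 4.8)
The plan is to work directly with the eigenvalue equations of the adjacency tensor of $G^{(k)}$. Write $x_i$ for the eigenvector entry at an original vertex $i\in V(G)$ and, for each edge $e=\{i,j\}\in E(G)$, write $y_{e,1},\dots,y_{e,k-2}$ for the entries at the $k-2$ vertices added to $e$; set $P_e=\prod_{\ell=1}^{k-2}y_{e,\ell}$. The eigen-equations for eigenvalue $\lambda$ then read $\lambda x_i^{k-1}=\sum_{j\sim i}x_j P_{e_{ij}}$ at each original vertex and $\lambda y_{e,\ell}^{k-1}=x_ix_j\prod_{\ell'\ne\ell}y_{e,\ell'}$ at each added vertex. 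First I would treat the case $\lambda\neq 0$; the value $\lambda=0$ will be disposed of separately at the end.

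For $\lambda\neq 0$ I would analyse the added vertices edge by edge. Multiplying the added-vertex equation by $y_{e,\ell}$ gives $\lambda y_{e,\ell}^{k}=x_ix_jP_e$, independent of $\ell$; a short argument then shows that on each edge either all $y_{e,\ell}$ vanish (an \emph{inactive} edge) or none do (an \emph{active} edge), and that an active edge forces $x_i,x_j\neq 0$. Multiplying $\lambda y_{e,\ell}^k=x_ix_jP_e$ over all $\ell$ yields the crucial identity $P_e^2=(x_ix_j/\lambda)^{k-2}$ on active edges, so $P_e$ equals one of the two square roots of the right-hand side. Writing $w_i:=x_i^{k/2}$ and $\beta:=\lambda^{k/2}$ for fixed branches, and recording the square-root ambiguity of $P_e$ as a sign $\pi(i,j)\in\{+1,-1\}$, I would substitute $P_e$ into the original-vertex equation and, after collecting exponents, obtain exactly
\[
\beta\,w_i=\sum_{j}\pi(i,j)\,w_j ,
\]
the sum ranging over the active edges at $i$. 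Hence $w$, restricted to the support $S=\{i:x_i\neq0\}$, is an eigenvector of the signed subgraph $(G',\pi)$ whose edge set consists of the active edges, with eigenvalue $\beta$ satisfying $\beta^2=\lambda^k$.

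The distinction between (a) and (b) should fall out of the single-added-vertex case. When $k=3$ an inactive edge $e=\{i,j\}$ forces $x_ix_j=0$, since its lone added-vertex equation reads $\lambda y_{e,1}^{2}=x_ix_j$; thus every edge with both endpoints in $S$ is active and $G'=G[S]$ is an \emph{induced} signed subgraph. When $k\geq 4$ an edge can be made inactive with $x_ix_j\neq 0$, because the product $\prod_{\ell'\ne\ell}y_{e,\ell'}$ then still contains a zero factor, so $G'$ is an arbitrary signed subgraph. For the converse I would reverse the construction: given a signed (induced, if $k=3$) subgraph $(G',\pi)$ with eigenvalue $\beta$, eigenvector $w$, and $\lambda$ with $\lambda^k=\beta^2$, I would set $x_i=w_i^{2/k}$, choose the added-vertex values on each edge of $G'$ as $k$-th roots realizing $P_e=\pi(i,j)(x_ix_j/\lambda)^{(k-2)/2}$ (the root-of-unity freedom in the $y_{e,\ell}$ lets one hit either sign), and put all remaining added vertices to $0$; substituting back reproduces both eigen-equations, where for $k=3$ the induced hypothesis guarantees that every inactive edge of $G$ has an endpoint outside the support, so that its added-vertex equation $\lambda\cdot 0=x_ix_j$ holds.

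Finally I would check $\lambda=0$ directly: an eigenvector supported on a single added vertex shows $0\in\mathrm{Spec}(G^{(k)})$ whenever $G$ has an edge, matching $\beta=0$ for a one-vertex signed subgraph, so the equivalence holds in this case as well. The main obstacle I anticipate is bookkeeping the fractional powers $x_i\mapsto x_i^{k/2}$, the factor $\lambda^{(k-2)/2}$, and the square roots $P_e$ coherently over $\mathbb{C}$: all of these involve branch choices, and the whole point of the theorem — the correction of the cited statement — is that the square-root sign of $P_e$ cannot be normalised away and must be carried as the edge sign $\pi$. Making precise that these signs are simultaneously realisable, so that the construction produces \emph{every} signing rather than merely some, and handling vanishing entries of $w$ together with possible isolated vertices of $G'$, is where the care is needed; the exponent arithmetic leading to the displayed signed eigen-equation is otherwise routine.
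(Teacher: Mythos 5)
Your proposal is correct and takes essentially the same route as the paper: the identity $P_e^2=(x_ix_j/\lambda)^{k-2}$ on active edges, the recording of the square-root ambiguity as an edge sign, and the substitution yielding the signed eigen-equation for $\beta=\lambda^{k/2}$ (with inactive edges forcing a zero endpoint precisely when $k=3$) is exactly the paper's argument. Your converse is the paper's Lemma \ref{xinyinli}, which settles the sign-realisability point you flag by assigning one distinguished added vertex per edge the factor $\pi(i,j)^{3/k}$ and the remaining ones $\pi(i,j)^{1/k}$.
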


We emphasize that the theorem states that every $k$-th root of $\beta^2$ is an eigenvalue of $G^{(k)}$. On the other hand, it does not state that both square roots of $\lambda^k$ are eigenvalues of a signed subgraph. However, if $\beta$ is an eigenvalue of the signed subgraph $\widehat{G}_{\pi}$, then $-\beta$ is an eigenvalue of  $\widehat{G}_{-\pi}$.

The rest of this paper is organized as follows. In Section \ref{zhunbeizhangjie},  some notation and basic definitions are introduced.
In Section \ref{zhuyaojieguo}, we will first extend the result of Zhou et al.~\cite{Zhou2014Some}, then give a counterexample to Statement \ref{chenshu1}, and finish with the proof of Theorem \ref{dingli2}.
\section{Preliminaries}\label{zhunbeizhangjie}
For a positive integer $n$, let $\left[ n \right] = \left\{ {1, \ldots ,n} \right\}$.
A $k$-order $n$-dimensional complex tensor $T= \left( {{t_{{i_1} \cdots {i_k}}}} \right) $ is a multidimensional array with $n^k$ entries over the complex number field $\mathbb{C}$, where ${i_j} \in \left[ n \right]$, for $j = 1, \ldots ,k$.
For $\mathbf{x} = {\left( {{x_1}, \ldots ,{x_n}} \right)^{\top}} \in {\mathbb{C}^n}$, we define ${\mathbf{x}^{\left[ {k - 1} \right]}} = {\left( {x_1^{k - 1}, \ldots ,x_n^{k - 1}} \right)^{\top}}$.
Moreover, $T{\mathbf{x}^{k - 1}}$ denotes a vector in $\mathbb{C}^{n}$ whose $i$-th component is
\[{\left( {T{\mathbf{x}^{k - 1}}} \right)_i} = \sum\limits_{{i_2}, \ldots ,{i_k}=1}^n {{t_{i{i_2} \cdots {i_k}}}{x_{{i_2}}} \cdots {x_{{i_k}}}} .\]
If there exists a nonzero vector $\mathbf{x} \in {\mathbb{C}^n}$ such that $T{\mathbf{x}^{k - 1}} = \lambda {\mathbf{x}^{\left[ {k - 1} \right]}}$, then $\lambda \in \mathbb{C}$ is called an \emph{eigenvalue} of $T$ and $\mathbf{x}$ is an \emph{eigenvector} of $T$ corresponding to $\lambda$.
The pair $(\lambda,\mathbf{x})$ is called an eigenpair of $T$ \cite{lim2005singular,qi2005eigenvalues}.

A hypergraph $H=(V,E)$ is called \emph{$k$-uniform} if each edge of $H$ contains exactly $k$ vertices.
Similar to the relation between graphs and matrices, there is a natural correspondence between uniform hypergraphs and (symmetric) tensors.
Indeed, for a $k$-uniform hypergraph $H$ with $n$ vertices, its \emph{adjacency tensor} ${A}_H=(a_{i_1i_2\ldots i_k})$ is a $k$-order $n$-dimensional tensor, where
\[{a_{{i_1}{i_2} \ldots {i_k}}} = \left\{ \begin{array}{l}
 \frac{1}{{\left( {k - 1} \right)!}},{\kern 37pt}{ \left\{ {{i_1},{i_2},\ldots ,{i_k}} \right\} \in {E}}, \\
 0, {\kern 57pt}\mathrm{ otherwise}. \\
 \end{array} \right.\]

For a graph $G=(V(G),E(G))$ and $e \in E(G)$, we use $N_{e}$ to denote the set of added vertices of $G^{(k)}$ on the edge $e$.
Thus, the set $e\cup N_e$ is a hyperedge of $G^{(k)}$.
By $E_i(G^{(k)})$, we denote the set of hyperedges containing $i$.

Let $x^{S}=\prod_{s \in S}x_s$ for $S\subseteq V(G^{(k)})$.
Then it follows easily that $(\lambda,\mathbf{x})$ is an eigenpair of $G^{(k)}$
if and only if
\begin{align}\label{shizi1}
\lambda x^{k-1}_i&=\sum_{h \in E_i(G^{(k)})}{x^{h\setminus \{i\}}}\notag\\
&=\sum_{j \sim i}{x_jx^{N_{\{i,j\}}}}
\end{align}
for every $i \in V(G)$ and
\begin{align}\label{shizi2}
  \lambda x^{k-1}_v = x_ix_jx^{N_{\{i,j\}}\setminus \{v\}}
\end{align}
for every $v \in N_{\{i,j\}}$ and $\{i,j\} \in E(G)$.

\section{All eigenvalues of the power hypergraph}\label{zhuyaojieguo}

In this section, we will prove our main result and give a counterexample to Statement \ref{chenshu1}.

\subsection{More eigenvalues from signed subgraphs and a counterexample to Statement \ref{chenshu1}}\label{sec:3.1}

First, we will extend the result of Zhou et al.~\cite{Zhou2014Some} by showing how to obtain more eigenvalues of the power hypergraph by using signed subgraphs.

Let $(\beta, \mathbf{y})$ be an eigenpair of some signed (induced, if $k=3$) subgraph $\widehat{G}_{\pi}$ of $G$, with $\beta \neq 0$.
Let $\lambda \in \mathbb{C}$ be such that $\lambda^k=\beta^2$.
We now let $\gamma \in \mathbb{C}$ be such that $\gamma^2=\lambda$, then $\gamma^{2k}=\lambda^k=\beta^2$, and hence $\gamma^k=\pm\beta$.
Since both $(\beta,\mathbf{y})$ and $(-\beta,\mathbf{y})$ are eigenpairs of some signed subgraphs ($\widehat{G}_{\pi}$ and $\widehat{G}_{-\pi}$), we assume without loss of generality that $\gamma^k=\beta$.

For $i \in V(\widehat{G})$, we let $z_i$ be any $k$-th root of $y_i$ and for $\{i,j\} \in E({\widehat{G}})$, we let $\pi_k(i,j)$ be any $k$-th root of $\pi(i,j)$.

For each $\{i,j\}\in E({\widehat{G}})$, we also fix $v_{ij}$ as one of the vertices in $N_{\{i,j\}}$. Using these, we can construct an eigenvector as follows.
Let $\mathbf{x}$  be the vector with entries
\begin{align*}
x_v=\left\{ \begin{array}{ll}
z_i^2, & \mathrm{for}~v=i \in V({\widehat{G}}),\\
\pi_k(i,j)^3z_iz_j/\gamma, & \mathrm{for}~ v=v_{ij} \in N_{\{i,j\}} ~ \mathrm{and} ~\{i,j\} \in E({\widehat{G}}),\\
\pi_k(i,j)z_iz_j/\gamma, &\mathrm{for}~ v\in N_{\{i,j\}}\setminus \{v_{ij}\} ~ \mathrm{and} ~\{i,j\} \in E({\widehat{G}}),\\
0, &\mathrm{otherwise}.
\end{array} \right.
\end{align*}

\begin{lem}\label{xinyinli}

The pair $(\lambda, \mathbf{x})$ is an eigenpair of $G^{(k)}$.
\end{lem}

\begin{proof}
First, we note that for vertices not in the power graph of the subgraph, the required equations \eqref{shizi1} and \eqref{shizi2} are trivial. The same holds for vertices $i \in V(G)$ for which $y_i=0$ ($z_i=0$) and the related vertices in $N_{\{i,j\}}$. In the remainder of the proof, we may therefore assume that $x_i \neq 0$ for $i \in V({\widehat{G}})$ and $x_v \neq 0$ for $v \in N_{\{i,j\}}$.

Next, for each $\{i,j\}\in E({\widehat{G}})$ and each $v \in N_{\{i,j\}}$, we have that
$$\lambda x_v^k=\lambda [\pi_k(i,j)\frac{z_iz_j}{\gamma}]^k=z_i^2z_j^2\pi_k(i,j)^k\frac{(z_iz_j)^{k-2}}{\gamma^{k-2}}=x_ix_jx^{N_{\{i,j\}}},$$
(this also holds for $v=v_{ij}$, because $\pi(i,j)^3=\pi(i,j)$), which shows \eqref{shizi2}.

Finally, for $i \in V(\widehat{G})$, we obtain from the above that
$$\lambda x_i^k=\lambda y_i^2=\frac{\lambda y_i}{\beta} \sum_{j \sim i}\pi(i,j)y_j=\sum_{j \sim i}\lambda [\pi_k(i,j)\frac{z_iz_j}{\gamma}]^k=\sum_{j \sim i}x_ix_jx^{N_{\{i,j\}}},$$
which shows \eqref{shizi1}.
\end{proof}

Note that for odd $k$, one can replace the factors $\pi_k(i,j)^{3}$ and $\pi_k(i,j)$ in the definition of $\mathbf{x}$ by $\pi(i,j)$, to obtain a somewhat simpler expression.

From Lemma \ref{xinyinli}, we can easily get a counterexample for Statement \ref{chenshu1}. Indeed, let $K_4$ be the complete graph with four vertices, and consider the signed subgraph $K_4^{-}$ by signing one of its edges, say $\{1,2\}$, negative. This signed subgraph has eigenvalue $\sqrt{5}$ with eigenvector $(\sqrt{5}-1,\sqrt{5}-1,2,2)^{\top}$, as one can easily check. Thus, by Lemma \ref{xinyinli}, the power hypergraph $K^{(3)}_4$ has an eigenvalue $\sqrt[3]{5}$. 
According to Statement \ref{chenshu1}, some induced subgraph of $K_4$ should therefore have an eigenvalue $\pm \sqrt{5}$.
But this is clearly not the case, because the induced subgraphs are complete graphs, which only have integer eigenvalues.

\subsection{Characterizing all eigenvalues of the power hypergraph}

To finish, we will show that each eigenvalue of a power hypergraph must be obtained from a signed subgraph, thus proving Theorem \ref{dingli2}.

\begin{proofb}
First of all, it is known that a $k$-uniform hypergraph always has an eigenvalue $0$ for $k\geq 3$ \cite{Qi2014Heigenvalue}.
On the other hand, $0$ occurs as an eigenvalue of the induced subgraphs $K_1$. Thus, for the remainder of the proof, we only need to consider the case of $\lambda \neq 0$.

Clearly, Lemma \ref{xinyinli} shows one implication of Theorem \ref{dingli2}. Thus, what remains to show is that every nonzero eigenvalue $\lambda$ of the power hypergraph gives rise to an (appropriate) eigenvalue $\beta$ of a signed (induced) subgraph.

Let $(\lambda,\mathbf{x})$ be an eigenpair of $G^{(k)}$ with $\lambda\neq 0$. Let $\beta$ be such that $\beta^2=\lambda^k$, and let $y_i$ be such that $y_i^2=x_i^k$.
Consider the induced subgraph $\widehat{G}$ on the vertices $ i \in V(G)$ with $x_i \neq 0$. Note that $x_i=0$ for all $i \in V(G)$ is impossible, because of \eqref{shizi2} and $\lambda \neq 0$.
We will indeed prove that there is a signed subgraph on $\widehat{G}$ with an eigenvalue $\beta$ and eigenvector $\mathbf{y}$.

Indeed, using \eqref{shizi2}, we have that
\begin{align*}
  \lambda^{k-2}(x^{N_{\{i,j\}}})^{k-1}&=\prod_{v\in {N_{\{i,j\}}}}\lambda x_v^{k-1}\\
  &=\prod_{v\in {N_{\{i,j\}}}}x_ix_jx^{N_{\{i,j\}}\setminus \{v\}}\\
  &=(x_ix_j)^{k-2}(x^{N_{\{i,j\}}})^{k-3},
\end{align*}
that is,
$(x^{N_{\{i,j\}}})^{k-3}\left(\lambda^{k-2}(x^{N_{\{i,j\}}})^{2}-(x_ix_j)^{k-2}\right)=0$,
and hence
\begin{align*}
(\beta x_ix_jx^{N_{\{i,j\}}})^{k-3}\left((\beta x_ix_jx^{N_{\{i,j\}}})^{2}-(\lambda y_iy_j)^2\right)=0.
\end{align*}
Therefore, we have that $\beta x_ix_jx^{N_{\{i,j\}}}= \mathrm{sgn}(i,j)(\lambda y_iy_j)$, where
\begin{align}\label{signing}
\mathrm{sgn}(i,j) \in \left\{ \begin{array}{l}
\{\pm1\},{\kern 27pt} k=3,\\
\{\pm1,0\} ,{\kern 17pt} k\geq 4. \\
 \end{array} \right.
\end{align}
So, using \eqref{shizi1}, we obtain that for every $i \in V(\widehat{G})$,
\begin{align}\label{eigenequation}
\beta y_i=\frac{\beta x_i}{\lambda y_i}\lambda x_i^{k-1}=\frac{\beta x_i}{\lambda y_i} \sum_{j \sim i}x_jx^{N_{\{i,j\}}}=\frac{1}{\lambda y_i} \sum_{j \sim i}\beta x_ix_jx^{N_{\{i,j\}}}=\sum_{j:\{i,j\}\in E(\widehat{G})}\mathrm{sgn}(i,j)y_j.
\end{align}
Now note that \eqref{signing} defines a sign function on a subgraph of $\widehat{G}$ (or on $\widehat{G}$ itself for $k=3$), and \eqref{eigenequation} shows that this signed subgraph has eigenvalue $\beta$.
\end{proofb}

\section*{Acknowledgement}
The authors would like to thank the referees for their helpful comments.
And this work is supported by the National Natural Science Foundation of China (No. 12071097), the Natural Science Foundation for The Excellent Youth Scholars
of the Heilongjiang Province (No. YQ2022A002) and the Fundamental Research Funds for the Central Universities.

\section*{References}
\bibliographystyle{plain}
\bibliography{spbib}
\end{spacing}
\end{document}